\newtheorem{theorem}{Theorem}
\newtheorem{lemma}[theorem]{Lemma}
\newtheorem{corollary}[theorem]{Corollary}
\newtheorem{conjecture}{Conjecture}
\def\mgraph{abstract map\xspace}
\def\mgraphs{abstract maps\xspace}
\def\NN{{\mathbb N}}
\def\WW{{\cal W}}
\begin{document}
\title{Circular edge-colorings of cubic graphs with girth six}
\author{Daniel Kr{\'a}l'\thanks{Institute for Theoretical Computer Science (ITI),
        Faculty of Mathematics and Physics, Charles University,
	Malostransk\'e n\'am\v est\'\i{} 25, 118 00 Prague 1, Czech
    Republic. This research was partially supported by the
    grant GACR 201/09/0197.
	E-mail: \texttt{kral@kam.mff.cuni.cz}.
	Institute for Theoretical Computer Science is supported as project 1M0545 by Czech Ministry of Education.}\and
        Edita M{\'a}{\v c}ajov{\'a}\thanks{Department of Computer Science,
	Faculty of Mathematics, Physics and Informatics,
        Comenius University, Mlynsk{\'a} dolina, 842 48 Bratislava, Slovakia.
        This author's work was partially supported by the APVV project
        0111-07.
	E-mail: \texttt{macajova@dcs.fmph.uniba.sk}.}\and
        J\'an Maz{\'a}k\thanks{Department of Computer Science,
	Faculty of Mathematics, Physics and Informatics,
        Comenius University, Mlynsk{\'a} dolina, 842 48 Bratislava, Slovakia.
        This author's work was partially supported by the grant
        UK/384/2009 and by the APVV project 0111-07.
	E-mail: \texttt{mazak@dcs.fmph.uniba.sk}.}\and
	Jean-S\'ebastien Sereni\thanks{CNRS (LIAFA,
	Universit\'e Denis Diderot), Paris, France, and
    Department of Applied Mathematics (KAM) - Faculty of Mathematics and
    Physics, Charles University, Prague, Czech Republic. This author's
    work was partially supported by the \'Egide \textsc{eco-net} project
    16305SB.
	E-mail: \texttt{sereni@kam.mff.cuni.cz}.}}
\date{}
\maketitle
\begin{abstract}
We show that
the circular chromatic index
of a (sub)cubic graph with odd-girth at least $7$ is at most $7/2$.
\end{abstract}

\section{Introduction}

A classical theorem of Vizing~\cite{bib-vizing64} asserts that
the \emph{chromatic index} of every cubic bridgeless graph, i.e.,
the smallest number of colors needed to properly edge-color
such a graph, is $3$ or $4$. Cubic cyclically $4$-edge-connected
graphs with chromatic
index $4$ are known as \emph{snarks} and it is known that smallest
counter-examples, if any, to several deep open conjectures in graph theory, such
as the Cycle Double Cover Conjecture,
must be snarks.

Our research is motivated by edge-colorings of cubic bridgeless
graphs with no short cycles.
The Girth Conjecture of Jaeger and Swart~\cite{bib-jaeger80+} asserted that
there are no snarks with large girth. This conjecture was
refuted by Kochol~\cite{bib-kochol96} who constructed snarks
of arbitrary large girth. Hence, it is natural to ask whether
it can be said that snarks of large girth are close
to being $3$-edge-colorable in some sense.

One of the relaxations of ordinary colorings are circular
colorings, introduced by Vince~\cite{bib-vince88}. A \emph{$(p,q)$-coloring}
of a graph $G$ is a coloring of vertices with colors
from the set $\{1,\ldots,p\}$ such that
any two adjacent vertices receive colors $a$ and $b$
with $q\le |a-b|\le p-q$.
Circular colorings naturally appear
in different settings, which is witnessed by several equivalent definitions
of this notion as exposed in the surveys by Zhu~\cite{bib-zhu01,bib-zhu06}. 

The infimum of the ratios $p/q$ such that $G$
has a $(p,q)$-coloring is the \emph{circular chromatic number} of $G$.
It is known that the infimum is the minimum for all finite graphs and
the ceiling of the circular chromatic number of a graph is equal
to its chromatic number. Thus, the circular chromatic number
is a fractional relaxation of the chromatic number.
The \emph{circular chromatic index}
of a graph is the circular chromatic number of its line-graph.

Zhu~\cite{bib-zhu01} asked
whether there exist snarks with circular chromatic
index close or equal to $4$, and
as there are snarks with arbitrary large girth,
it is also interesting to know whether there exist such snarks of arbitrary
large girth.
Afshani et al.~\cite{bib-afshani05+} showed that the circular chromatic
index of every cubic bridgeless graph is at most $11/3$ and
Kaiser et al.~\cite{bib-kaiser04+} showed that for every $\varepsilon>0$,
there exists $g$ such that every cubic bridgeless graph with girth at least $g$
has circular chromatic index at most $3+\varepsilon$. This latter result
was generalized to graphs with bounded maximum degree~\cite{bib-kaiser07+}.
Moreover,
the circular chromatic indices of several well-known classes of snarks
have been
determined~\cite{bib-ghebleh06+,bib-ghebleh07,bib-ghebleh08,bib-mazak08}.

The Petersen graph is the only cubic bridgeless graph that is known
to have the circular chromatic index equal to $11/3$. In fact, it is
the only example of a cubic bridgeless graph with circular chromatic
index greater than $7/2$. This leads to the following (wild) conjecture.

\begin{conjecture}
\label{conj}
Every cubic bridgeless graph different from the Petersen graph
has circular chromatic index strictly less than $11/3$, maybe,
at most $7/2$.
\end{conjecture}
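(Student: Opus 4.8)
The plan is to argue by contradiction, taking a counterexample $G$ that is smallest with respect to the number of vertices, and to exploit the fact that the conjecture carries two targets of increasing strength: the bound $\chi'_c(G)<11/3$ and the stronger, tentative bound $\chi'_c(G)\le 7/2$. For the latter, the theorem of the present paper already disposes of every cubic graph of odd-girth at least $7$ (in particular every bipartite cubic graph, whose odd-girth is infinite), so a minimal counterexample to the $7/2$ bound must have odd-girth exactly $3$ or $5$; the whole difficulty is thus concentrated on graphs that are rich in triangles and in $5$-cycles, that is, precisely the regime in which the Petersen graph lives. For the weaker target $<11/3$ I would instead start from the bound of $11/3$ due to Afshani et al.\ and try to characterise the cubic bridgeless graphs for which that bound is tight, aiming to show that the Petersen graph is the only one by arguing that an $(11,3)$-edge-colouring can always be perturbed to a $(p,q)$-colouring with $p/q<11/3$ otherwise.

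First I would treat odd-girth $3$, that is, the presence of a triangle, by a reduction. Contracting a triangle $T$ of $G$ to a single vertex yields a smaller cubic bridgeless graph $G'$ (a reverse $\Delta$-$Y$ move); by minimality $G'$ admits a $(7,2)$-edge-colouring, and a finite verification shows that any such colouring extends across the three edges of $T$, since three colours pairwise at circular distance at least $2$ on a cycle of length $7$ leave enough room to colour the interior edges at each of the three triangle vertices. This step is routine in spirit but must be handled with care when two triangles share an edge or a vertex, so I would first establish that a minimal counterexample contains no such degenerate clusters of short cycles.

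The heart of the matter is odd-girth exactly $5$: here $G$ is triangle-free and contains an induced $5$-cycle, and the Petersen graph is the canonical obstruction. Since $G$ need not be planar, I would avoid face-based discharging and instead run a structural analysis of how $5$-cycles and their attached edges interact. The plan is to assemble a family of reducible configurations, each built from a $5$-cycle together with its pendant edges and part of its second neighbourhood, and then to prove by a counting or potential-function argument over the set of short cycles that any cubic bridgeless triangle-free graph either contains one of these configurations or is isomorphic to the Petersen graph. Reducibility itself would be verified by the same extension principle as for triangles: delete or locally rewire the configuration, invoke minimality to colour the resulting smaller graph, and lift the colouring back, the slack on the length-$7$ colour circle — three edges at a vertex need occupy only three of seven positions pairwise at distance $2$ — being what makes each lift possible.

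The main obstacle, I expect, is exactly this odd-girth-$5$ analysis near the Petersen graph. Every natural local reduction around a $5$-cycle risks producing a smaller graph that is again Petersen-like, so the crux is a \emph{fixed-point} statement: the Petersen graph must be shown to be the unique irreducible graph in this class, with no leftover cases surviving the counting argument. A second genuine difficulty is the gap between the two targets. Lifting arguments on a colour circle of length $7$ naturally yield $(7,2)$-colourings and hence the bound $7/2$, but there is no a priori reason that the same configurations stay reducible for the weaker bound $11/3$ with the Petersen graph as the sole exception; reconciling ``$\le 7/2$ except for finitely many sporadic graphs'' with ``$<11/3$ except for the Petersen graph'' may well require two distinct reduction schemes. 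I would therefore develop the $7/2$ machinery first, catalogue whatever sporadic exceptions it leaves behind, and only then revisit those exceptions by hand against the weaker $11/3$ bound.
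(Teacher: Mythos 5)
This statement is Conjecture~\ref{conj}, which the paper explicitly labels as a (wild) conjecture and does \emph{not} prove: the paper's actual contribution is Theorem~\ref{thm-main} and its corollaries, which settle only the case of graphs with a $2$-factor free of $3$- and $5$-cycles (in particular odd-girth at least $7$). So there is no proof in the paper to compare against, and your proposal should be judged on its own terms as an attempted proof of an open problem. On those terms it is not a proof but a research program whose decisive steps are placeholders. The heart of the matter --- a catalogue of reducible configurations around $5$-cycles together with a counting or potential-function argument showing that every cubic bridgeless triangle-free graph either contains one of them or is the Petersen graph --- is exactly the open content of the conjecture, and you supply no configurations, no discharging rules, and no classification argument; you yourself name this as ``the main obstacle,'' which is an accurate self-assessment but leaves the proof with no core.

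There is also a concrete failure in the one step you present as routine. The triangle-contraction induction breaks precisely when the contracted graph $G'$ is the Petersen graph: take $G$ to be the Petersen graph with one vertex expanded into a triangle. Then minimality gives you, for the weaker target, only an $(11,3)$-edge-coloring of $G'$, and even if it extends across the triangle you obtain $\chi'_c(G)\le 11/3$, whereas the conjecture demands strict inequality for $G\neq$ Petersen; your suggested remedy --- perturbing a tight $(11,3)$-coloring into a $(p,q)$-coloring with $p/q<11/3$ --- is asserted without any mechanism, and characterizing tightness in the Afshani et al.\ bound is itself unresolved. For the $7/2$ target the same example shows the induction hypothesis simply fails for $G'$, so the triangle reduction is not self-contained: it needs the odd-girth-$5$ analysis to already exclude Petersen-like images, creating a circularity your two-phase plan does not break. (The finite check that three pairwise distance-$\ge 2$ colors in $\mathbb{Z}_7$ always admit an extension across a triangle is plausible and verifiable, but it is the only solid piece; the surrounding induction is where the argument would fail.)
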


In their paper, Kaiser et al.~\cite{bib-kaiser04+},
formulated a problem to determine the smallest girth $g_0$ such that
every cubic bridgeless graph with girth at least $g_0$ has circular
chromatic index at most $7/2$, and they showed that $g_0\le 14$.
Note that $g_0\ge 6$ because of the Petersen graph. In this paper,
we prove that every cubic bridgeless graph having a $2$-factor composed
of cycles of length different from $3$ and $5$ has circular chromatic index at most
$7/2$.
This implies that $g_0=6$, i.e., the circular chromatic index of every
cubic bridgeless graph with girth at least $6$ is at most $7/2$.
Our result also applies to subcubic graphs with odd-girth at least $7$.

\section{Compatible trails}

The core of our argument is formed by decomposing
the graph obtained by contracting a $2$-factor of a cubic bridgeless graph
into trails. We first introduce notation related to such decompositions and
then prove their existence.

\subsection{Notation}

An \emph{\mgraph} is a graph with multiple edges, loops and half-edges
allowed with a fixed cyclic ordering of the ends around each vertex.
Formally, an \mgraph $(V,E,\varphi)$ is comprised of a vertex-set
$V$ and an edge-set
$E$. Each edge has two ends: one of them is incident with a vertex, and
the other may, but need not, be incident with a vertex. An edge that has
exactly one end incident with a vertex is a \emph{half-edge}. An edge
with both ends incident with the same vertex is a \emph{loop}.
The \emph{degree} $d_v$ of a vertex $v\in V$ is the number of ends
incident with $v$ (in particular, loops are counted twice).
Moreover, for each vertex,
there is a cyclic ordering of the ends incident with it. These orderings
are represented by a surjective mapping $\varphi:V\times\mathbb{N}\to E$
such that $\varphi(v,n)$ is an edge incident with $v$ and
$\varphi(v,n)=\varphi(v,n+d_v)$ for every $(v,n)\in
V\times\mathbb{N}$. An edge has two pre-images
in $\{v\}\times\{1,\ldots,d_v\}$ if and only if it is a loop incident
with $v$.
An \mgraph naturally yields an embedding of the corresponding
multi-graph on a surface.

\emph{Deleting a vertex $v\in V$} from
an \mgraph $G=(V,E,\varphi)$ yields an \mgraph $G'=(V',E',\varphi')$
with $V'=V\setminus\{v\}$,
$E'=\varphi(V'\times\NN)$ and $\varphi'$ being the restriction of $\varphi$
to $V'\times\NN$. In other words, the vertex $v$ and
half-edges and loops incident with $v$
are removed. The other edges incident with $v$ become half-edges.

A \emph{trail} in an \mgraph is a sequence of mutually distinct edges
$e_1,e_2,\ldots,e_k$
such that
\begin{itemize}
\item $e_{i}$ and $e_{i+1}$ have a common end-vertex $v_i$, for
$i\in\{1,\ldots,k-1\}$; and
\item $v_{i-1}\neq v_i$ unless $e_i$ is a loop, for $i\in\{2,\ldots,k-1\}$.
\end{itemize}
If $e_1$ is not a half-edge, then the trail \emph{starts
at a vertex}. Similarly, if $e_k$ is not a half-edge, the trail
\emph{ends at a vertex}. Furthermore, a trail that consists of a single
half-edge either starts or ends at a vertex.
For a given trail,
a linear ordering on its edges is naturally defined.
Let $T_1$ and $T_2$ be two edge-disjoint trails and $v$ a vertex.
If $T_1$ ends at $v$ and $T_2$ starts at $v$,
then we can \emph{link} the two trails by identifying
the end of $T_1$ with the beginning of $T_2$: thereby, we obtain
a new trail that first follows the edges of $T_1$ and then those
of $T_2$.
We also define the linking of a trail $T$ that starts
and ends at the same vertex with itself: in this case, we obtain the
same trail $T$ except that the ordering of the edges becomes cyclic.
A trail obtained in this way is \emph{closed}.
%if it starts and ends
%at the same vertex; the ordering of the edges of the trail becomes
%cyclic, and a trail obtained in this way is called \emph{closed}.
Trails that are not closed are \emph{open}. Note
that a trail that starts and ends at the same vertex can be either
open or closed.

Trails $W_1,\ldots,W_K$ form a \emph{compatible decomposition} of an \mgraph $G$
if all of them are open, every edge is contained in exactly one of the trails and for every vertex $v$
of odd degree of $G$, there is an index $i_v$
such that the following pairs of edges
are consecutive (regardless of their order) in some of the trails
(and thus are not the same edge):
\begin{itemize}
\item $\varphi(v,i_v)$ and $\varphi(v,i_v+5)$;
\item $\varphi(v,i_v+1)$ and $\varphi(v,i_v+3)$; and
\item $\varphi(v,i_v+2)$ and $\varphi(v,i_v+4)$.
\end{itemize}
Note that if an \mgraph $G$ has a compatible decomposition, then it has
no vertices of degree $1$ or $5$.

\subsection{Existence}

We now prove that every \mgraph with no vertices of degree $1$, $3$ or
$5$
has a compatible decomposition.
In the next two lemmas, which form the base of our inductive argument,
\mgraphs with a single vertex are analyzed.

\begin{lemma}
\label{lm-7}
Every \mgraph $G$ that has a single vertex and the degree of this vertex
is $7$ has a compatible decomposition.
\end{lemma}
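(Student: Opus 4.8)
The plan is to reduce the existence of a compatible decomposition to a purely combinatorial statement about matchings of a $7$-cycle. Since $G$ has a single vertex $v$ of degree $7$, every edge is either a loop or a half-edge; writing $\ell$ for the number of loops and $h$ for the number of half-edges we have $h+2\ell=7$, so $\ell\in\{0,1,2,3\}$. A compatible decomposition at $v$ is governed entirely by the choice of the index $i_v$: the three required pairs are $\{\varphi(v,i_v),\varphi(v,i_v+5)\}$, $\{\varphi(v,i_v+1),\varphi(v,i_v+3)\}$ and $\{\varphi(v,i_v+2),\varphi(v,i_v+4)\}$, while the end $\varphi(v,i_v+6)$ is left unpaired. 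The key observation is that each of these three pairs joins two ends whose indices differ by $2$ modulo $7$. Hence, if $C^\ast$ denotes the cycle on the seven ends in which two ends are adjacent exactly when their indices differ by $2$ modulo $7$ (a single $7$-cycle, as $\gcd(2,7)=1$), then the three transition pairs form a near-perfect matching $M_{i_v}$ of $C^\ast$, and the seven admissible values of $i_v$ yield precisely the seven near-perfect matchings of $C^\ast$, one for each omitted end.

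Next I would record the loop structure as a matching $L$ on the seven ends (each loop joins its two ends, half-edges leave their ends uncovered) and analyse the multigraph $L\cup M_{i_v}$. Being the union of two matchings, it is a disjoint union of paths and even cycles. Each path corresponds to an open trail whose internal edges are loops, so the non-backtracking condition holds automatically, and whose extremities are half-edges or loose ends of loops; each even cycle corresponds to a forbidden closed trail. Moreover a transition pair coinciding with a loop is exactly a $2$-cycle of $L\cup M_{i_v}$, so acyclicity also secures the requirement that the paired edges be distinct. Therefore it suffices to find $i_v$, equivalently a near-perfect matching $M$ of $C^\ast$, for which $L\cup M$ is acyclic; the trails are then read off from the paths, and the local pattern at $v$ holds by construction.

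To produce such an $M$ I would argue by cases on $\ell$. The cases $\ell=0$, where $L$ is empty and every $M$ works, and $\ell=1$, where only a $2$-cycle can occur and one simply omits an end so that the arc parallel to the single loop is not retained, are immediate. For the substantive cases $\ell\in\{2,3\}$ I would use a contraction: contracting the three edges of $M$ produces four super-vertices, namely the three matched pairs together with the single omitted end, and $L\cup M$ is acyclic precisely when the images of the loops form a forest on these four super-vertices, that is, no loop spans a single pair, no two loops span the same pair, and no three loops form a triangle. As the omitted end slides around $C^\ast$ the partition into pairs slides with it, and a short inspection of the at most seven positions exhibits a forest.

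The hard part is the case $\ell=3$. Here $L$ is a perfect matching of six of the seven ends, $L\cup M$ has six edges on seven vertices, and acyclicity forces $L\cup M$ to be a single Hamiltonian path. One first notes that the omitted end must be a loop-end rather than the unique half-edge, since omitting the half-edge leaves both $L$ and $M$ as perfect matchings of the remaining six ends and their union is then a disjoint union of even cycles; one must then check that for every perfect matching $L$ at least one of the six admissible omitted ends yields the required path. This final verification, a genuine though finite check over the configurations of $L$, is where the real content of the lemma lies.
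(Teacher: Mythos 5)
Your reduction is correct and elegantly packaged: the three transition pairs for a given index $i_v$ are exactly the unique perfect matching of the path obtained from the distance-$2$ cycle $C^\ast$ by deleting the end $\varphi(v,i_v+6)$, so the seven choices of $i_v$ do correspond bijectively to the seven near-perfect matchings of $C^\ast$; and since with seven ends the transition system of any compatible decomposition must consist of exactly one such triple, the lemma is \emph{equivalent} to the statement that for every loop-matching $L$ some near-perfect matching $M$ of $C^\ast$ makes $L\cup M$ acyclic. Your translations of the failure modes are also sound: $2$-cycles of $L\cup M$ capture both a transition pair collapsing onto a single loop and the closed trails of length one, longer alternating cycles capture the other closed trails, and path components give open trails whose internal edges are loops, as the trail definition demands. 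This is a genuinely different organization from the paper, which fixes a half-edge at position $4$ and analyzes when both $\WW_1$ and $\WW_2$ fail.

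The gap is that you never perform the verification in the only cases where the lemma has content. For $\ell=0$ and $\ell=1$ your argument is complete, but for $\ell=2$ you only assert that ``a short inspection of the at most seven positions exhibits a forest,'' and for $\ell=3$ you explicitly concede that the required check ``is where the real content of the lemma lies'' without carrying it out. Because your reduction is an equivalence, that unperformed check \emph{is} the lemma; what you have written is a correct reformulation plus the trivial cases, not a proof. Nor can the check be waved away by soft counting: a single loop parallel to a $C^\ast$-edge already excludes $3$ of the $7$ matchings, for $\ell=3$ omitting the half-edge end always fails (as you note), and the remaining obstructions ($4$-cycles, $6$-cycles, and for $\ell=3$ the demand that $L\cup M$ be a single Hamiltonian path) make a naive union bound exceed the seven available positions, so a genuine case analysis is unavoidable. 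This is precisely what the paper supplies: it shows that simultaneous failure of $\WW_1$ and $\WW_2$ forces one of six explicit loop configurations (each an instance of your $L$ with $\ell\in\{1,2,3\}$), exhibits a working index for each, and handles the residual subcase $e_3=e_5$, $e_1=e_7$ separately. To complete your proof you would need to do the analogous finite enumeration in your language --- for $\ell=3$, roughly the $15$ perfect matchings $L$ of the six loop-ends (fewer after using the rotational symmetry of $C^\ast$) against the six admissible omitted ends --- or find a structural argument replacing it, which you do not offer.
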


\begin{proof}
Let $v$ be the vertex and let $e_i$ be the image of $\varphi(v,i)$
for each $i\in\{1,2,\ldots,7\}$.
Further, let $\WW_i$ be the set of trails obtained by the following
process: initially, each edge of $G$ forms a single trail. We then
link the two trails starting or ending with $e_i$ and $e_{i+5}$,
next we link the two trails starting or ending with $e_{i+1}$ and $e_{i+3}$, and
last the two trails starting or ending with $e_{i+2}$ and $e_{i+4}$.
This operation does not always
yield a compatible decomposition, since it may create closed trails. For
instance, $\WW_i$ contains a closed trail if $e_i=e_{i+5}$, or if
$e_{i+1}=e_{i+4}$ and $e_{i+2}=e_{i+3}$ (in this last case, after
having linked $e_{i+1}$ with $e_{i+3}$, thereby obtaining a trail
$T$, the last linking amounts to linking $T$ with itself).
On the other hand, if all the obtained trails are open, then
$\WW_i$ is a compatible decomposition, with $i$ being the
index $i_v$ of the definition.

Our goal is to show that at least one of the sets $\WW_i$
is composed only of open trails.
Since the degree of $v$ is odd, the vertex $v$
is incident with at least one half-edge.
By symmetry, we assume that this half-edge is $e_4$.

\begin{figure}
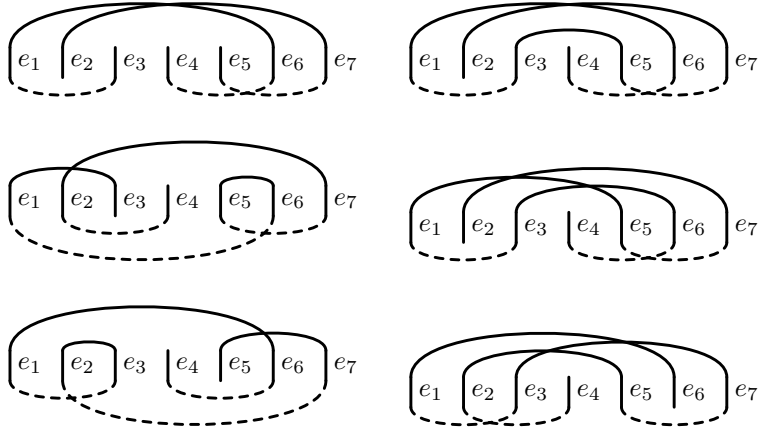

\begin{center}
\epsfbox{cubg6.11}
\hskip 5mm
\epsfbox{cubg6.12}
\vskip 5mm
\epsfbox{cubg6.13}
\hskip 5mm
\epsfbox{cubg6.14}
\vskip 5mm
\epsfbox{cubg6.15}
\hskip 5mm
\epsfbox{cubg6.16}
\end{center}
\caption{Compatible decompositions in the first (the top two pictures) and
         the last four (the remaining pictures) main cases
         in the proof of Lemma~\ref{lm-7}. The edges incident with the vertex
	 $v$ are drawn with solid lines and the way in which they are
	 joined to form the trails of the decomposition is indicated
	 by dashed lines.}
\label{fig-7-A}
\end{figure}

If $\WW_1$ contains a closed trail, then one of the following four cases
applies (recall that $e_4$ is a half-edge):
\begin{itemize}
\item $e_1=e_6$,
\item $e_3=e_5$,
\item $e_1=e_3$ and $e_5=e_6$, or
\item $e_1=e_5$ and $e_3=e_6$.
\end{itemize}
Similarly, if $\WW_2$ contains a closed trail, then one of the following
four cases applies:
\begin{itemize}
\item $e_2=e_7$,
\item $e_3=e_5$,
\item $e_2=e_3$ and $e_5=e_7$, or
\item $e_2=e_5$ and $e_3=e_7$.
\end{itemize}
Comparing the two sets of four possible cases, we conclude that if none
of $\WW_1$ and $\WW_2$ is compatible, then at least one
of the following six cases applies
(these cases are referred to as \emph{main} cases in Figures~\ref{fig-7-A}
and \ref{fig-7-B}):
\begin{itemize}
\item $e_1=e_6$ and $e_2=e_7$,
\item $e_3=e_5$,
\item $e_1=e_3$, $e_5=e_6$ and $e_2=e_7$,
\item $e_1=e_5$, $e_3=e_6$ and $e_2=e_7$,
\item $e_1=e_6$, $e_2=e_3$ and $e_5=e_7$, or
\item $e_1=e_6$, $e_2=e_5$ and $e_3=e_7$.
\end{itemize}
In the first case, the set $\WW_3$ contains only open trails.
In the last four cases, the sets $\WW_4$, $\WW_3$, $\WW_6$ and
$\WW_7$, respectively,
are composed of open trails (see Figure~\ref{fig-7-A} for an illustration).

\begin{figure}
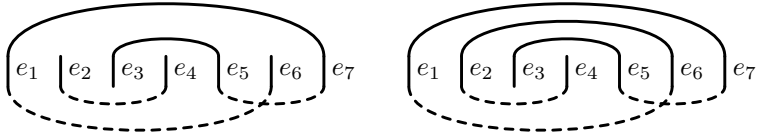

\begin{center}
\epsfbox{cubg6.21}
\hskip 5mm
\epsfbox{cubg6.22}
\end{center}
\caption{Compatible decompositions in the second main case
         in the proof of Lemma~\ref{lm-7}.}
\label{fig-7-B}
\end{figure}

We now focus on the second case. If $\WW_3$ contains a closed trail (and
$e_3=e_5$), it holds that $e_1=e_7$. However, in this case,
the set $\WW_4$ contains no closed trails (see Figure~\ref{fig-7-B}). This finishes
the proof of the lemma.
\end{proof}

\begin{lemma}
\label{lm-single}
Every \mgraph $G$ that has a single vertex and the degree of this vertex
is different from $1$, $3$ and $5$ has a compatible decomposition.
\end{lemma}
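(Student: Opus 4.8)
The vertex $v$ has degree $d=d_v$, which is even and at least $6$, or odd and at least $7$. The plan is to split into two cases according to the parity of $d$, reducing each to tools we already have.

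\medskip

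\noindent\textbf{The even case.} Suppose $d$ is even. Then $v$ is not a vertex of odd degree, so the compatibility conditions impose no constraint at $v$. Thus I only need a decomposition of $G$ into open trails. First I would observe that every half-edge incident with $v$ must be the first or last edge of some open trail, and every loop at $v$ contributes a closed trail unless I break it; to avoid closed trails, the natural idea is to pair up the ends around $v$ so that consecutive (in the pairing) edges are linked, forming open trails. Concretely, pair $\varphi(v,2j-1)$ with $\varphi(v,2j)$ for $j\in\{1,\ldots,d/2\}$, linking the corresponding trails. As in Lemma~\ref{lm-7}, the only danger is creating a closed trail, which happens exactly when a pair consists of the two ends of a single loop. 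Since $d$ is even, I have freedom to shift the pairing: I would argue that among the $d$ cyclic shifts of this pairing, at least one avoids pairing the two ends of any loop, because each loop occupies two specific positions and the bad shifts for a single loop are limited; a counting or parity argument over the even number of positions yields a good pairing. This produces a decomposition into open trails, and since $d$ is even, no compatibility condition is violated.

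\medskip

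\noindent\textbf{The odd case.} Suppose $d$ is odd, so $d\ge 7$ (the values $1,3,5$ being excluded). Here I want to mimic the structure of Lemma~\ref{lm-7}: pick an index $i$ and link $\varphi(v,i)$ with $\varphi(v,i+5)$, then $\varphi(v,i+1)$ with $\varphi(v,i+3)$, then $\varphi(v,i+2)$ with $\varphi(v,i+4)$, which handles six of the ends and realizes the three prescribed pairs of the compatibility condition. The remaining $d-6$ ends (an even number, since $d$ is odd) I would pair off consecutively and link as in the even case, producing additional open trails. Again the only obstruction is the creation of closed trails. Lemma~\ref{lm-7} already shows that the six-end gadget can be made closed-trail-free for an appropriate choice of $i$ when $d=7$; I would invoke the \emph{same} argument to choose $i$ so that the gadget avoids closed trails, while the extra pairing of the remaining ends is handled by the freedom exploited in the even case.

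\medskip

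\noindent The main obstacle, and where care is needed, is the interaction between the compatibility gadget on six ends and the loops at $v$: a single loop could be linked so that one of its ends is used in the gadget while the other is paired among the remaining ends, and one must ensure no sequence of linkings closes up into a closed trail. The clean way to organize this is to reduce the general odd case to Lemma~\ref{lm-7}: I would contract or suppress the extra $d-6$ ends by pairing and linking them first (choosing the pairing to avoid closed trails, which is possible since there are evenly many and the loop positions are controllable), and then treat the resulting configuration on six ends exactly as in Lemma~\ref{lm-7}, whose case analysis already guarantees a compatible choice of $i$. The even case reduces similarly to the half-edge/loop bookkeeping alone. Thus the whole lemma follows from Lemma~\ref{lm-7} together with a short argument that an even number of ends can always be paired into open trails.
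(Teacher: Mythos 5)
Your proposal has genuine gaps in both cases, and the most important one is that the odd case is not, in fact, a reduction to Lemma~\ref{lm-7}. First, a definitional point that simplifies your even case and breaks part of your reasoning: in this paper a trail consisting of a single loop is \emph{open} — closed trails arise only from the explicit operation of linking a trail that starts and ends at the same vertex with itself. So when $d_v$ is even there is nothing to do at all: the trivial decomposition in which every edge is its own trail is a decomposition into open trails, and compatibility imposes no condition at an even-degree vertex. This is exactly the paper's (one-line) even case. Your insistence on pairing all the ends is not only unnecessary but makes your shifting claim false: for $d_v=2$ with a single loop, every pairing of the two ends pairs the two ends of that loop, so no cyclic shift is good.

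In the odd case there are two concrete failures. (a) An arithmetic error: the gadget occupies the six positions $i,\dots,i+5$, so the number of remaining ends is $d_v-6$, which is \emph{odd} when $d_v$ is odd — your parenthetical ``an even number, since $d$ is odd'' is wrong, and you cannot pair off the remainder as described. (b) More seriously, ``invoke the same argument'' to choose $i$ is unsupported: Lemma~\ref{lm-7} is specific to degree $7$, where indices wrap modulo $7$ and the sets $\WW_6$ and $\WW_7$ exist; for $d_v\ge 9$ the shifts $\WW_i$ involve different index arithmetic and different possible edge identifications, which is precisely why the paper gives a separate case analysis for $d_v\ge 9$. There, the remaining $d_v-6$ edges are simply left as singleton open trails (no extra linking), a half-edge is placed at position $5$, the failure of both $\WW_2$ and $\WW_3$ yields six cases, four are dispatched by $\WW_1$ or $\WW_4$, and the two remaining cases are settled by $\WW_5$ — with the analysis further split according to whether $d_v=9$ (where $e_{10}$ wraps around to $e_1$) or $d_v>9$. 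By leaving the remainder unlinked, the paper avoids exactly the gadget/remainder interaction you yourself identify as ``the main obstacle'': your additional pairing creates new closure patterns (e.g., two loops, each with one end in the gadget positions $i$ and $i+5$ and their other ends paired with each other in the remainder, close into a cycle after the gadget linking), and the assertion that ``the loop positions are controllable'' is not an argument. So the structure of your proof — reduce to Lemma~\ref{lm-7} plus a pairing of leftover ends — cannot be carried out as stated; a fresh case analysis for $d_v\ge 9$, as in the paper, is needed.
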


\begin{proof}
Let $v$ be the only vertex of the graph $G$. If the degree of $v$ is even,
then there is nothing to prove as every decomposition into open trails is
compatible. If the degree of $v$ is $7$, then the statement follows
from Lemma~\ref{lm-7}. Hence, we assume that the degree of $v$ is odd and
it is at least $9$. Let $e_i$ be the image of $\varphi(v,i)$
for $i\in\{1,2,\ldots,9\}$.

As the degree of $v$ is odd, $v$ is incident with at least one half-edge.
Without loss of generality, we can assume that $e_5$ is the half-edge.
Let $\WW_i$ be the set of trails defined as in the proof of Lemma~\ref{lm-7}.
Assume that both the sets $\WW_2$ and $\WW_3$
contain a closed trail; if any of them were composed of open trails only, then it would form a compatible
decomposition.

\begin{figure}
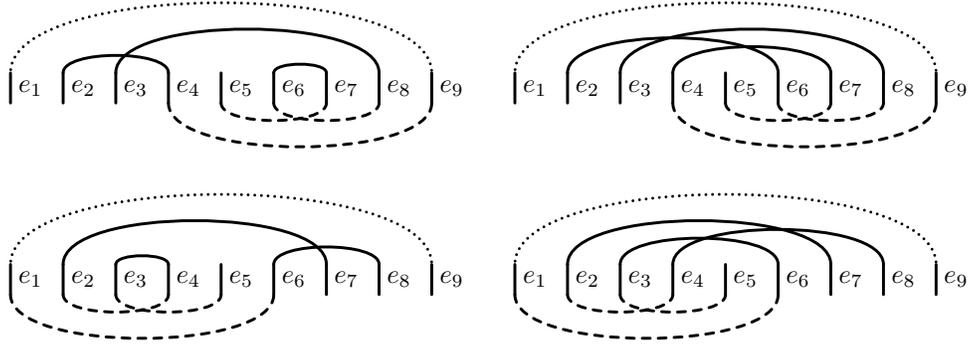

\begin{center}
\epsfbox{cubg6.31}
\hskip 5mm
\epsfbox{cubg6.32}
\vskip 5mm
\epsfbox{cubg6.33}
\hskip 5mm
\epsfbox{cubg6.34}
\end{center}
\caption{Compatible decompositions in the last four cases
         in the proof of Lemma~\ref{lm-single}. Additional
	 loops that can also be present are drawn with dotted lines.}
\label{fig-single-A}
\end{figure}

As in the proof of Lemma~\ref{lm-7}, we infer from the facts that
both $\WW_2$ and $\WW_3$ contain a closed trail that one of the following
six cases applies (replace $e_4$ with $e_5$ and $\WW_i$ with $\WW_{i+1}$
for $i\in\{1,2\}$ in the analysis done in the proof of Lemma~\ref{lm-7}):
\begin{itemize}
\item $e_2=e_7$ and $e_3=e_8$,
\item $e_4=e_6$,
\item $e_2=e_4$, $e_6=e_7$ and $e_3=e_8$,
\item $e_2=e_6$, $e_4=e_7$ and $e_3=e_8$,
\item $e_2=e_7$, $e_3=e_4$ and $e_6=e_8$, or
\item $e_2=e_7$, $e_3=e_6$ and $e_4=e_8$.
\end{itemize}
In the last four cases, the sets $\WW_4$, $\WW_4$, $\WW_1$ and $\WW_1$,
respectively, contain no closed trails (see Figure~\ref{fig-single-A} for an illustration).
Let us focus on the first two cases, now.

\begin{figure}
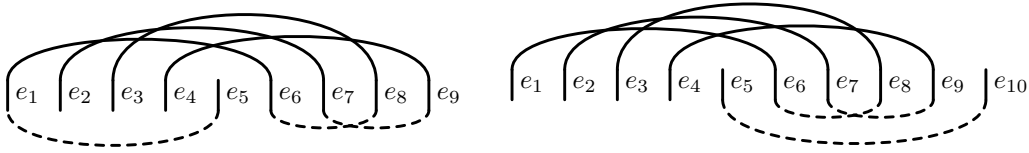

\begin{center}
\epsfbox{cubg6.41}
\hskip 5mm
\epsfbox{cubg6.42}
\end{center}
\caption{Compatible decompositions in the case where $e_2=e_7$ and $e_3=e_8$
         in the proof of Lemma~\ref{lm-single}. Two cases are distinguished
	 based on whether the degree of $v$ is equal to $9$ or not.}
\label{fig-single-B}
\end{figure}

Suppose that $e_2=e_7$ and $e_3=e_8$. If $\WW_1$ contains a closed trail,
then $e_1=e_6$. Further, if $\WW_4$ contains a closed trail,
then $e_4=e_9$. Thus, the set $\WW_5$
contains no closed trails (see Figure~\ref{fig-single-B}), and hence is
a compatible decomposition.

\begin{figure}
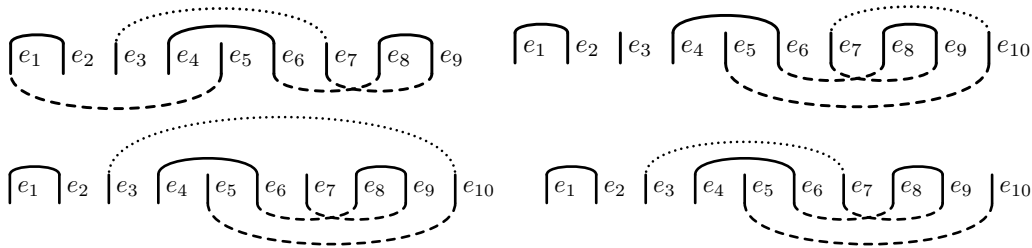

\begin{center}
\epsfbox{cubg6.51}
\hskip 5mm
\epsfbox{cubg6.52}\\
\epsfxsize=6.5cm
\epsfbox{cubg6.53}
\hskip 5mm
\epsfxsize=6.5cm
\epsfbox{cubg6.54}
\end{center}
\caption{Compatible decompositions in the case where $e_4=e_6$
         in the proof of Lemma~\ref{lm-single}. Two cases are distinguished
	 based on whether the degree of $v$ is equal to $9$ or not.
	 Additional loops that can also be present are drawn with dotted lines.}
\label{fig-single-C}
\end{figure}

Assume now that $e_4=e_6$. If $\WW_1$ contains a closed trail, then
$e_1=e_2$. Further, if $\WW_4$ contains a closed trail, then
$e_8=e_9$. Again, the set $\WW_5$
is then a compatible decomposition (see Figure~\ref{fig-single-C}).

To summarize, we have shown that at least one of the sets $\WW_i$,
$i\in\{1,2,3,4,5\}$, is composed of open trails only, and thus it forms a compatible
decomposition.
\end{proof}

Using Lemma~\ref{lm-single}, we show that every \mgraph with no
vertices of degree $1$, $3$ or $5$ has a compatible decomposition.

\begin{lemma}
\label{lm-mgraph}
Every \mgraph $G$ without vertices of degree $1$, $3$ or $5$
has a compatible decomposition.
\end{lemma}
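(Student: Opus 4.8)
The plan is to reduce the multi-vertex case to the single-vertex case (Lemma~\ref{lm-single}) by induction on the number of vertices. The base case is exactly Lemma~\ref{lm-single}. For the inductive step, I would pick an arbitrary vertex $v$ of $G$ and delete it, obtaining a smaller \mgraph $G'=(V',E',\varphi')$. By the definition of deletion, $v$ itself and all loops and half-edges at $v$ disappear, while every non-loop, non-half-edge at $v$ becomes a half-edge in $G'$. The crucial point is that deletion does not change the degree of any vertex other than $v$: a neighbor $u$ of $v$ keeps the same number of incident ends, the only change being that an edge-end that used to connect to $v$ now dangles as a half-edge at $u$. Hence $G'$ still has no vertices of degree $1$, $3$ or $5$, so by the induction hypothesis $G'$ has a compatible decomposition $W_1',\dots,W_{K'}'$, and separately the single-vertex \mgraph consisting of $v$ together with the edges incident with it (degree $d_v\ne 1,3,5$) has a compatible decomposition $U_1,\dots,U_L$ by Lemma~\ref{lm-single}.

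The idea is then to glue these two compatible decompositions back together along the edges that were split by the deletion. Each non-loop, non-half-edge $e$ originally at $v$ corresponds to a half-edge of $G'$ (an endpoint of some trail $W_j'$) and to a half-edge at $v$ in the single-vertex graph (an endpoint of some trail $U_\ell$); I would reconnect these by identifying the two matching half-edges, thereby linking the trail ending with that half-edge in $G'$ to the trail of the single-vertex decomposition that uses the corresponding half-edge at $v$. After performing all such identifications, every original edge of $G$ lies in exactly one resulting trail, and the consecutivity conditions at $v$ are inherited from the decomposition $U_1,\dots,U_L$ produced by Lemma~\ref{lm-single}, while the conditions at every odd-degree vertex of $V'$ are inherited from the compatible decomposition of $G'$.

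The main obstacle I anticipate is that the linking operation, as defined in the excerpt, may create \emph{closed} trails, whereas a compatible decomposition requires all trails to be \emph{open}. When I identify matching half-edges, a trail of $G'$ whose two endpoints both get reconnected to $v$ could close up into a cycle through $v$; more subtly, a cascade of identifications could merge several open trails into a single closed one. I would handle this by noting that, by the definition of a compatible decomposition, the trails $U_\ell$ at $v$ are all open, so each $U_\ell$ has two distinct ends; I would argue that whenever an undesirable closure threatens to occur, I can instead re-route the gluing, or appeal to the fact that Lemma~\ref{lm-single} actually furnishes a decomposition of the star at $v$ with enough freedom (a choice of the index $i_v$) to avoid forming a closed trail. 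Making this avoidance argument precise, and in particular verifying that a global choice consistent across all identifications exists, is the delicate part of the proof.

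A cleaner alternative, which I would pursue if the direct gluing becomes unwieldy, is to set up the induction so that I process one vertex at a time and maintain the invariant that the current partial decomposition consists only of open trails: I would start from the compatible decomposition of $G'$ guaranteed by induction, then reintroduce $v$ and distribute its incident edges into the pairing dictated by some index $i_v$, checking directly (as in Lemmas~\ref{lm-7} and~\ref{lm-single}) that at least one admissible choice of $i_v$ yields no closed trail through $v$. In either formulation the heart of the matter is the same: translating the single-vertex analysis already carried out in Lemma~\ref{lm-single} into a guarantee that the reassembled trails stay open, which is exactly where the case analysis on the local structure around $v$ will be needed.
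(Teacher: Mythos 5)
Your induction scheme and base case match the paper's, and you correctly isolate the one real difficulty: after deleting $v$ and re-attaching its star, linking may create closed trails. But your proposal does not actually resolve this difficulty, and the way you propose to invoke Lemma~\ref{lm-single} would fail. You apply Lemma~\ref{lm-single} to ``the single-vertex \mgraph consisting of $v$ together with the edges incident with it,'' i.e., to the bare star at $v$. That graph carries no information about which pairs of ends at $v$ are already joined \emph{outside} $v$ by trails of the inductive decomposition of $G'$; consequently the pairing of ends it produces can close up such a trail (for instance, if a trail of $\WW'$ begins with the edge $\varphi(v,i)$ and ends with $\varphi(v,j)$, and the star decomposition makes these two ends consecutive at $v$, that trail becomes closed). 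Moreover, the statement of Lemma~\ref{lm-single} guarantees only the existence of \emph{one} compatible decomposition; it provides none of the ``enough freedom in the choice of the index $i_v$'' that you appeal to in order to re-route the gluing. So your first route is unsound as stated, and your fallback --- ``check directly, as in Lemmas~\ref{lm-7} and~\ref{lm-single}, that some admissible $i_v$ yields no closed trail through $v$'' --- is exactly the delicate claim that remains unproven.

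The paper closes this gap with one clean idea that is missing from your write-up: encode the external connections themselves as a single-vertex \mgraph $H$. Take a vertex $w$ of degree $d_v$ and declare $\varphi(w,i)=\varphi(w,j)$ precisely when the current trail system contains a trail starting with $\varphi(v,i)$ and ending with $\varphi(v,j)$; thus trails of $\WW'$ with both endpoints at $v$ become loops at $w$, loops at $v$ are preserved, and trails with exactly one endpoint at $v$ become half-edges of $H$. Since $d_w=d_v\notin\{1,3,5\}$, Lemma~\ref{lm-single} applies to $H$, and a compatible decomposition $\WW_H$ of $H$ --- which by definition consists only of \emph{open} trails --- dictates how to link the trails of $G$ through $v$ in a way that provably creates no closed trail: a closed trail through $v$ in the reassembled decomposition would project to a closed trail of $\WW_H$. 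This is precisely the case analysis your fallback calls for, except that no new analysis is needed: Lemma~\ref{lm-single}, applied to $H$ rather than to the naked star, already \emph{is} that analysis. Without this reduction, your proof is incomplete at its central step.
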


\begin{proof}
The proof proceeds by induction on the number of vertices of $G$.
If $G$ has one vertex, then the statement follows from Lemma~\ref{lm-single}.
Otherwise, let $v$ be an arbitrary vertex of $G$ and let $G'$ be an \mgraph
obtained from $G$ by removing $v$. By the induction hypothesis, $G'$ has a compatible
decomposition $\WW'$ into trails.

Let $\WW$ be the set of trails obtained from $\WW'$ by adding the set
of loops and half-edges incident with $v$ in $G$.
Observe that $\WW$ is a set of trails of $G$ in which there is no
trail ``traversing'' $v$.
If the degree of $v$ is even, the set $\WW$ is a compatible
decomposition of $G$ as there is no restriction on how
the trails pass through the vertex $v$. Assume that the degree $d_v$
of $v$ is odd.

We now define an auxiliary \mgraph $H$. The \mgraph $H$ contains a single
vertex $w$ of degree $d_v$, and for $(i,j)\in\{1,2,\ldots,d_v\}^2$,
we have $\varphi(w,i)=\varphi(w,j)$
if and only if $\WW$ contains a trail starting with the edge $\varphi(v,i)$
and ending with $\varphi(v,j)$. In other words, trails of $\WW'$ that
both start and finish with an edge incident with $v$ correspond in $H$
to loops incident with $w$, the loops incident with $v$ are preserved and
the trails starting or finishing at $v$ (but not both) correspond
to half-edges. Trails containing no edge incident with $v$ have
no counterparts among the edges of $H$.

By Lemma~\ref{lm-single}, the \mgraph $H$ has a compatible decomposition $\WW_H$.
We can now obtain a compatible decomposition $\WW_G$ of $G$ as follows:
all the trails of $\WW$ neither starting nor ending at $v$ are added
to $\WW_G$.
Each trail $W$ of $\WW_H$ has a corresponding trail in $\WW_G$ that
is obtained by replacing every edge of $W$ with the corresponding
trail of $\WW$ and linking these trails.

It is straightforward to verify that $\WW_G$ is a set of open trails of $G$.
By induction, the trails pass through vertices of $G$ different
from $v$ in the way required by the definition of a compatible
decomposition. The trails also pass through $v$ in the required way
because $\WW_H$ is a compatible decomposition of $H$. Hence, $\WW_G$
is a compatible decomposition of $G$.
\end{proof}

\section{Graphs with odd-girth at least $7$}

We are now ready to prove our main theorem.

\begin{theorem}
\label{thm-main}
The circular chromatic index of every cubic graph with a $2$-factor
composed of cycles of lengths different from $3$ and $5$ is at most $7/2$.
\end{theorem}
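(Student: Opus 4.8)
The plan is to exhibit a $(7,2)$-edge-coloring of $G$, that is, an assignment of colors from $\mathbb{Z}_7$ to the edges of $G$ such that the colors of any two edges sharing a vertex are at circular distance at least $2$ (differ, modulo $7$, by one of $2,3,4,5$). Since the circular chromatic number is attained as a minimum and equals the infimum of the ratios $p/q$, producing such a coloring shows that the circular chromatic index of $G$ is at most $7/2$. The first observation is that the coloring is severely constrained locally: three colors that are pairwise at circular distance at least $2$ must split the circle of circumference $7$ into three arcs of lengths $2$, $2$ and $3$, so the three edges at every vertex necessarily receive colors of the form $\{a,a+2,a+4\}$ for some $a\in\mathbb{Z}_7$. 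In particular, at each vertex exactly two of the three incident edges (the \emph{outer} pair) are at circular distance $3$, and each of them is at distance $2$ from the third (\emph{middle}) edge.

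Next I would fix the given $2$-factor $F$ and let $M$ be the complementary perfect matching. Contracting each cycle of $F$ to a single vertex, and recording the cyclic order of the matching edges around the vertex from the cyclic order of the cycle, produces an \mgraph whose vertices have degree equal to the corresponding cycle length. Since these lengths avoid $1$, $3$ and $5$, Lemma~\ref{lm-mgraph} provides a compatible decomposition of this \mgraph into open trails; in terms of $G$, this is a decomposition of the matching $M$ into open trails together with, at each odd cycle, a prescribed pairing of six consecutive matching edges in the pattern $(i_v,i_v+5)$, $(i_v+1,i_v+3)$, $(i_v+2,i_v+4)$.

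The coloring itself I would build cycle by cycle for the cycle edges, and then along the trails for the matching edges. Along each cycle the colors of consecutive cycle edges change by $\pm2$ or $\pm3$, and the total change around the cycle must vanish modulo $7$. For an even cycle one can simply alternate two colors at circular distance $2$, so every vertex is of distance-$2$ type and the incident matching edge may freely take either of two colors. For an odd cycle such a $2$-coloring is impossible, and here the pairing supplied by the compatible decomposition dictates where to insert the finitely many distance-$3$ defects so that the displacement closes up modulo $7$; at a distance-$3$ vertex the middle color is forced and hence so is the incident matching edge. Finally, each matching edge receives a single color by propagating a consistent choice along its trail, using at the distance-$2$ vertices the two available options to reconcile the requirements coming from its two endpoints.

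The main obstacle, and the reason the machinery of Section~2 is needed, is precisely this last reconciliation for odd cycles. A matching edge is constrained at both endpoints, and following a trail chains these constraints together; the specific pairing pattern $(i,i+5),(i+1,i+3),(i+2,i+4)$ is exactly what guarantees that the forced colors at defect vertices are mutually compatible and that the free binary choices at distance-$2$ vertices can be set consistently. The role of the trails being \emph{open} is to prevent a cyclic sequence of constraints from accumulating a nonzero monodromy modulo $7$, which could otherwise be unsatisfiable; starting the propagation at a free end of each open trail, no such closed obstruction ever arises. Verifying that the pairing indeed resolves every odd cycle — splitting into cases according to cycle length, as foreshadowed by the separate treatment of degree $7$ and degree at least $9$ in Lemmas~\ref{lm-7} and~\ref{lm-single} — is the technical heart of the argument.
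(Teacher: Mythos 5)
Your setup matches the paper exactly: contract the $2$-factor $F$ to an \mgraph, invoke Lemma~\ref{lm-mgraph} to get a compatible decomposition of the matching $M$ into open trails, color even cycles with two alternating colors at distance $2$, and insert a short block of ``distance-$3$ defects'' on each odd cycle. But the proof is not actually completed: the step you yourself call ``the technical heart''---verifying that the pairing $(i_v,i_v+5)$, $(i_v+1,i_v+3)$, $(i_v+2,i_v+4)$ lets every odd cycle be colored consistently with the matching---is asserted rather than proved. Worse, your order of construction creates a circular dependency you never resolve. You propose to place the defects on each odd cycle first (which \emph{forces} the colors of the matching edges at the defect vertices) and only then to propagate matching colors along the trails. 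A matching edge is constrained at both of its endpoints, which may lie on two different odd cycles; nothing in your argument rules out that one endpoint forces it to one value and the other endpoint to the other, and ``propagating along trails'' cannot fix this, because consecutive edges of a trail are not adjacent in $G$---there is no mod-$7$ monodromy along a trail at all, so the trail structure by itself does not mediate between the forced values. The openness of the trails matters for a mod-$2$ (parity) reason, not a mod-$7$ one.

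The paper resolves the dependency by reversing your two steps. First color the edges of $M$: alternate $0$ and $1$ along each trail of the decomposition, which is well defined precisely because the trails are open (this is the only place openness is used). The compatibility condition then guarantees, for each odd cycle, an index $k$ with $c_k\neq c_{k+5}$, $c_{k+1}\neq c_{k+3}$, $c_{k+2}\neq c_{k+4}$, and a short two-case argument (no case analysis on the cycle length is needed---the split between degree $7$ and degree at least $9$ lives entirely inside the decomposition Lemmas~\ref{lm-7} and~\ref{lm-single}, not in the coloring step) yields an index $k'$ with $c_{k'}=c_{k'+1}\neq c_{k'+2}=c_{k'+3}$. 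With, say, $c_1=c_2=0$ and $c_3=c_4=1$, one colors $v_1v_2,v_2v_3,v_3v_4$ with $2,4,6$ and the rest of the cycle alternately $3,5$; the already-fixed matching colors are then compatible at every vertex, with no propagation or reconciliation left to do. To repair your proposal you would either have to adopt this order, or else prove the consistency claim you currently only assert---and as stated (defects placed before the matching colors exist) that claim is not justified.
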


\begin{proof}
Let $G$ be a cubic graph and $F$ a $2$-factor of $G$ composed of cycles
of lengths different from $3$ and $5$, and let $M$ be the perfect matching complementary to
$F$. The multi-graph obtained by contracting $F$ can be viewed
as an \mgraph $H$: the vertices of $H$ correspond to the cycles of
the $2$-factor $F$, and the order in which the edges of $M$ are incident
with cycles of $F$ naturally defines the function $\varphi$. Note that
$H$ has no half-edges and its loops correspond to chords of cycles of $F$.

Since no cycle of $F$ has length $3$ or $5$, no vertex of $H$ has degree
$1,3$ or $5$. Thus, by Lemma~\ref{lm-mgraph}, the \mgraph $H$ has a compatible
decomposition $\WW$. Color the edges of every trail of $\WW$ with $0$ and $1$
in an alternating way. Since the edges of $H$ correspond one-to-one
to the edges of $M$, we have obtained a coloring of the edges of $M$
with $0$ and $1$.

We now construct a $(7,2)$-edge-coloring of the edges of $G$.
Let $C=v_0v_1\cdots v_{\ell-1}$ be a cycle of $F$ and
$c_i$ the color of the edge of $M$
incident with the vertex $v_i$, for $i\in\{0,1,\ldots,\ell-1\}$.
If the length $\ell$ of $C$
is even, we color the edges of $C$ with $3$ and $5$ in an alternating way.
Let us consider the case where $\ell$ is odd. Since $\WW$ is a compatible
decomposition, there exists an index $k$ such that $c_k\neq c_{k+5}$,
$c_{k+1}\not=c_{k+3}$ and $c_{k+2}\not=c_{k+4}$ (indices are taken
modulo $\ell$) as the colors of the edges of trails of $\WW$ alternate.

We now show that there exists an index $k'$ such that
$c_{k'}=c_{k'+1}\not=c_{k'+2}=c_{k'+3}$. If $c_{k+1}=c_{k+2}$,
then set $k'=k+1$. Otherwise, $c_{k+1}=c_{k+4}\neq c_{k+2}=c_{k+3}$.
Since either $c_k$ or
$c_{k+5}$ is equal to $c_{k+1}=c_{k+4}$, the index
$k'$ can be set to $k$ or $k+2$.

By symmetry, we can assume in the remainder
that $k'=1$, $c_1=c_2=0$ and $c_3=c_4=1$.
Color the edge $v_1v_2$ with $2$, the edge $v_2v_3$ with $4$ and
the edge $v_3v_4$ with $6$. The remaining edges are colored with $3$ and $5$
in the alternating way (see Figure~\ref{fig-main}).
We have obtained a proper coloring of $C$. As we can extend the coloring
of the edges of $M$ to all cycles of $F$, the resulting $(7,2)$-edge-coloring
witnesses that the circular chromatic index of $G$ does not exceed $7/2$.
\end{proof}
\begin{figure}
\begin{center}
\epsfbox{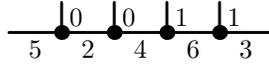}
\end{center}
\caption{Coloring odd cycles in the proof of Theorem~\ref{thm-main}.}
\label{fig-main}
\end{figure}

Petersen's theorem~\cite{bib-petersen91} asserts that
every cubic bridgeless graph has a perfect
matching; this yields the next corollary of Theorem~\ref{thm-main}.

\begin{corollary}
\label{cor-bridgeless}
The circular chromatic index of every cubic bridgeless graph
with girth $6$ or more is at most $7/2$.
\end{corollary}

Finally, we show that the assumption that the given graph is cubic
can be relaxed in Corollary~\ref{cor-bridgeless}.

\begin{corollary}
\label{cor-main}
The circular chromatic index of every subcubic graph
with odd-girth $7$ or more is at most $7/2$.
\end{corollary}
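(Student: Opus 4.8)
The plan is to deduce the statement from Theorem~\ref{thm-main} by embedding the given subcubic graph, as an edge-subgraph, into a cubic graph to which that theorem applies. Two observations make this strategy attractive. First, since the odd-girth of the given graph $G$ is at least $7$, the graph $G$ has no cycle of length $3$ or $5$ at all; consequently, if one manages to build a cubic graph $G^{*}$ with odd-girth at least $7$ that possesses a $2$-factor, then this $2$-factor automatically consists of cycles of lengths different from $3$ and $5$, so Theorem~\ref{thm-main} yields a $(7,2)$-edge-coloring of $G^{*}$. Second, if $G$ is isomorphic to an edge-subgraph of $G^{*}$, then restricting such a coloring to the edges of $G$ gives a $(7,2)$-edge-coloring of $G$, because the color constraints of $G$ form a subset of those of $G^{*}$; hence the circular chromatic index of $G$ is at most $7/2$, as desired.

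First I would reduce to the case where $G$ is $2$-edge-connected. Components are treated independently, and a bridge is handled by a gluing argument: cutting $G$ at a bridge $e$ yields two graphs, each containing $e$ and each with fewer edges; we color them inductively and then add a fixed constant modulo $7$ to all colors on one side so that the two colors assigned to $e$ coincide. Such a rotation preserves the property of being a $(7,2)$-coloring, since it preserves all circular distances, and single edges form the trivial base case. After this reduction every remaining vertex has degree $2$ or $3$. To obtain a cubic graph I would take two disjoint copies $G_{1}$ and $G_{2}$ of $G$ and, for every vertex $v$ of degree $2$, add the edge joining its two copies $v_{1}$ and $v_{2}$; the resulting graph $G^{*}$ is cubic and contains $G_{1}\cong G$ as an edge-subgraph.

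Two properties of $G^{*}$ then need to be established. The first is that $G^{*}$ still has odd-girth at least $7$, and the key point is a parity argument: any cycle of $G^{*}$ not contained in a single copy uses an even number of the added edges, and projecting the two copies onto $G$ turns it into a closed walk $W$ of $G$ whose length equals the cycle length minus the number of added edges used. If the cycle is odd, then $W$ is an odd closed walk, and thus contains an odd cycle of $G$; since that cycle has length at least $7$, so does $W$, and therefore so does the original cycle. The second property is that $G^{*}$ is bridgeless: as $G_{1}$ and $G_{2}$ are $2$-edge-connected, no edge inside a copy is a bridge, and an added edge fails to be a bridge as soon as at least two of them are present. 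Petersen's theorem then provides a perfect matching of $G^{*}$, whose complement is the sought $2$-factor.

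The main obstacle is the degenerate situation in which $G$ has exactly one vertex of degree $2$: then a single edge joins the two copies and is a bridge of $G^{*}$, so $G^{*}$ has no $2$-factor and the argument above breaks down. I expect this case to require separate treatment, for instance by replacing the plain doubling with a small gadget at the unique degree-$2$ vertex that raises its degree to $3$ without creating a short odd cycle and without disconnecting the two copies, or by handling such sparse graphs directly. Verifying that this modification preserves odd-girth at least $7$ (once more a parity computation) while restoring a suitable $2$-factor is, I anticipate, the delicate technical heart of the proof; the remaining parts, including the odd-girth computation for the generic doubling and the bridge-gluing reduction, are routine parity and symmetry arguments.
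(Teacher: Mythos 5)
Your overall strategy is the paper's: reduce to a connected $2$-edge-connected graph by splitting at bridges and rotating one of the two $(7,2)$-edge-colorings, embed $G$ into a cubic graph of odd-girth at least $7$ built from two disjoint copies of $G$, obtain a $2$-factor as the complement of a perfect matching, observe that odd-girth at least $7$ forbids cycles of length $3$ and $5$ (both odd) in that $2$-factor, apply Theorem~\ref{thm-main}, and restrict the coloring to $G$. Your doubling is simpler than the paper's: you join the two copies of each degree-$2$ vertex by a direct edge, whereas the paper inserts between each corresponding pair a copy of a $3$-edge-connected cubic graph of odd-girth at least $7$ with one edge removed. Your parity argument that the direct doubling preserves odd-girth at least $7$ is correct (an odd cycle of $G^*$ crosses between the copies an even number of times and projects to an odd closed walk of $G$, which contains an odd cycle of length at least $7$), and the bridgeless case with at least two degree-$2$ vertices goes through as you say. (A minor loose end: if $G$ has no degree-$2$ vertex at all, your $G^*$ is disconnected; this case should simply be dispatched directly, as $G$ is then cubic and bridgeless, so Petersen's theorem and Theorem~\ref{thm-main} apply to $G$ itself.)

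The genuine gap is the one you flagged but did not close: the case where $G$ has exactly one vertex of degree $2$. This case really occurs (subdivide one edge of the Heawood graph: the result is $2$-edge-connected, subcubic, of odd-girth $7$, with a single degree-$2$ vertex), so it cannot be waved away, and your proposed remedy is only anticipated, not carried out. Moreover, the gadget you envisage would not by itself suffice: even with the paper's gadget, in this very case the resulting cubic graph has two bridges (the two edges attaching the unique gadget), so bridgelessness is unattainable by this route and Petersen's theorem is simply the wrong tool. The missing idea is to replace Petersen's theorem by the classical consequence of Tutte's $1$-factor theorem that every connected cubic graph with at most two bridges has a perfect matching; this is exactly what the paper invokes, and it also completes your simpler construction verbatim: in the degenerate case your $G^*$ is cubic with exactly one bridge, hence has a perfect matching, whose complementary $2$-factor consists of cycles of lengths different from $3$ and $5$ because $G^*$ has odd-girth at least $7$, and Theorem~\ref{thm-main} applies. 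With that single substitution your argument becomes a complete proof, in fact dispensing with any gadget at all.
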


\begin{proof}
Let $G$ be a subcubic graph with odd-girth $7$ or more that has a circular
chromatic index greater than $7/2$ and that has the smallest number of
vertices among all such graphs. Consequently, the minimum degree of $G$
is at least $2$. Similarly, $G$ is connected.
The graph $G$ is also bridgeless: otherwise, each of the two
graphs obtained from $G$ by splitting along the bridge has
a $(7,2)$-edge-coloring and these edge-colorings (after rotating
the colors if necessary) combine to a $(7,2)$-edge-coloring of $G$.

If $G$ has no vertices of degree $2$, then $G$ is a cubic bridgeless
graph. Petersen's theorem~\cite{bib-petersen91} ensures that $G$ has a
a $2$-factor $F$. By our assumption, no cycle of $F$ has length $3$ or
$5$, and therefore $G$ cannot be a counter-example by
Theorem~\ref{thm-main}.
So, assume that $G$ has at least one vertex of degree $2$.
Let $H$ be a $3$-edge-connected cubic graph of odd-girth at least $7$ from which we remove an
edge.
We construct the graph $G'$ as follows.
We take two disjoint copies of $G$. For each pair $(u,v)$ of
corresponding vertices of degree $2$ (one in each copy of $G$),
we add a copy of $H$, join $u$
to a vertex of degree $2$ of $H$, and $v$ to the other vertex of
degree $2$ in $H$.
The resulting graph
$G'$ is cubic and has odd-girth at least $7$. Moreover, since $G$ is
bridgeless, $G'$ has at most two bridges. More precisely, $G'$ has two
bridges if and only if $G$ has exactly one vertex of degree $2$, and $G'$ is
bridgeless otherwise. Therefore,
$G'$ has a perfect matching by Tutte's theorem~\cite{LoPl86,Tut45}.
Consequently, since $G'$ has odd-girth at least
$7$, Theorem~\ref{thm-main} implies the existence of a
$(7,2)$-edge-coloring of $G'$. This edge-coloring restricted to
$G$ yields a $(7,2)$-edge-coloring of $G$, a contradiction.
\end{proof}

\section*{Acknowledgment}

The first and the last authors would like to thank Mohammad Ghebleh and
Luke Postle for discussions and insights on circular edge-colorings
of cubic bridgeless graphs.

\end{document}